\title[The Diederich--Fornaess exponent and conformal harmonic measures]
{A local expression of the Diederich--Fornaess exponent and the exponent of conformal harmonic measures}
\author{Masanori Adachi}
\address{Graduate~School~of~Mathematics, Nagoya~University, Furo-cho Chikusa-ku Nagoya 464-8602, Japan}
\email{m08002z@math.nagoya-u.ac.jp}
\curraddr{Center~for~Geometry~and~its~Applications, Pohang~University~of~Science~and~Technology, San 31 Hyoja-dong Nam-gu Pohang 790-784, Republic of Korea
}
\subjclass[2010]{Primary~32T27, Secondary~32V15.}
\keywords{Levi-flat, normal bundle, Diederich--Fornaess exponent, conformal harmonic measure}
\date{\today}
\thanks{This is the author's final version of an article published as Bull. Braz. Math. Soc. (N.S.) {\bf 46} (2015), no. 1, 153--167.
The final publication is available at www.springerlink.com}
\newtheorem*{Thm*}{Theorem}
\newtheorem{Thm}{Theorem}[section]
\newtheorem{Prop}[Thm]{Proposition}
\theoremstyle{definition}
\newtheorem{Def}[Thm]{Definition}
\newtheorem{Ex}[Thm]{Example}
\newtheorem{Cor}[Thm]{Corollary}
\newtheorem{Lem}[Thm]{Lemma}
\newtheorem*{claim}{Claim}
\theoremstyle{remark}
\newtheorem{Rem}[Thm]{Remark}
\def\Z{\mathbb{Z}}
\def\R{\mathbb{R}}
\def\C{\mathbb{C}}
\def\D{\mathbb{D}}
\def\Cont{\mathcal{C}}
\def\CP{\mathbb{CP}}
\newcommand{\eqdef}{\mathrel{\mathop:}=}
\def\Re{\mathrm{Re}\,}
\def\Im{\mathrm{Im}\,}
\def\zbar{\overline{z}}
\def\d'{\partial}
\def\dbar{\opa}
\def\dbar{\overline{\partial}}
\def\bd{\partial}
\newcommand{\ol}{\overline }
\newsavebox{\@brx}
\newcommand{\llangle}[1][]{\savebox{\@brx}{\(\m@th{#1\langle}\)}%
  \mathopen{\copy\@brx\kern-0.5\wd\@brx\usebox{\@brx}}}
\newcommand{\rrangle}[1][]{\savebox{\@brx}{\(\m@th{#1\rangle}\)}%
  \mathclose{\copy\@brx\kern-0.5\wd\@brx\usebox{\@brx}}}
\begin{document}

\maketitle

\begin{abstract}
A local expression of the Diederich--Fornaess exponent 
of complements of Levi-flat real hypersurfaces is exhibited. 
This expression describes the correspondence between pseudoconvexity of 
their complements and positivity of their normal bundles, 
which was suggested in a work of Brunella, in a quantitative way.  
As an application, a connection between the Diederich--Fornaess exponent 
and the exponent of conformal harmonic measures is discussed.
\end{abstract}

\section{Introduction}
A smooth real hypersurface $M$ in a complex manifold $X$ is said to be Levi-flat if 
it separates the ambient manifold $X$ locally into two Stein domains. 
From Frobenius' theorem, it is equivalent for $M$ to have a foliation $\mathcal{F}$, called the Levi foliation, 
by non-singular complex hypersurfaces of $X$.
By its definition, therefore, we have two viewpoints to study Levi-flat real hypersurfaces, 
namely, extrinsically from several complex variables and intrinsically from foliation theory. 

One of the indications of such situation was observed by Brunella \cite{brunella2008}. 
He proved that, under some regularity conditions, Takeuchi 1-convexity of the complement 
of a smooth compact Levi-flat real hypersurface follows from the positivity of its normal bundle $N^{1,0}$.
We can also go through its converse actually, and 
are able to establish the correspondence between these qualitative properties.

These two notions have their quantitative counterparts:
Takeuchi 1-convexity is characterized by the existence of a boundary distance function whose
Diederich--Fornaess exponent in a weak sense is positive, 
according to a work of Ohsawa and Sibony \cite{ohsawa-sibony1998};
The positivity of the normal bundle is 
obviously determined by the existence of a hermitian metric with positive Chern curvature along the leaves. 
We can therefore expect a quantitative correspondence between the Diederich--Fornaess exponent and 
the Chern curvature along the leaves, and this is the main question of this paper. 

Our answer is the following local expression that relates not only the two quantities 
but also an additional curvature-like term denoted by $iA_h$, which has appeared in literature on foliation theory
(cf. Frankel \cite{frankel1995}, Deroin \cite{deroin2005}).

\begin{Thm}
\label{localformula}
Let $X$ be a complex manifold and $\Omega \Subset X$ a relatively compact domain 
with $\Cont^3$-smooth Levi-flat boundary $M$.
Suppose that $\Omega$ has a boundary distance function $\delta$ with positive Diederich--Fornaess exponent in the weak sense
and denote by $h_\delta$ the hermitian metric of the normal bundle $N^{1,0}$ induced from $\delta$. 
Then, the local Diederich--Fornaess exponent $\eta_\delta(p)$  
is expressed as
\[
\eta_\delta(p) =  \sup \{ \eta \in (0,1) \mid i\Theta_{h_\delta}(p) - \frac{\eta}{1-\eta} iA_{h_\delta}(p) > 0 \}
\]
where $i\Theta_{h_\delta} \eqdef -i\d'_b\dbar_b \log h_\delta$ is the Chern curvature of $h_\delta$ along the leaves
and $iA_{h_\delta} \eqdef i\d'_b\log {h_\delta} \wedge \dbar_b \log {h_\delta}$. 
\end{Thm}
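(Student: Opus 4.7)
The plan is to reduce the theorem to a local matrix computation in foliated coordinates. Near $p\in M$ choose $(z_{1},\dots,z_{n-1},w=x+iy)$ adapted to the Levi foliation, so that $M=\{y=0\}$, $\Omega=\{y>0\}$ locally, and the leaves of $\mathcal{F}$ are $\{w=\mathrm{const}\}$. Writing $\delta=\lambda y$ for a smooth positive function $\lambda$, Brunella's induced metric $h_{\delta}$ on $N^{1,0}$, read in the trivialization $\pa/\pa w$, satisfies
\[
\log h_{\delta}\equiv\log\lambda\pmod{\text{functions constant along leaves}},
\]
so that $\d'_{b}\log h_{\delta}$ and $\dbar_{b}\log h_{\delta}$ at $p$ coincide with $\d'_{b}\log\lambda$ and $\dbar_{b}\log\lambda$. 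The $\Cont^{3}$-smoothness of $M$ supplies the regularity of $\lambda$ needed for what follows.

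By definition $\eta_{\delta}(p)$ is the supremum of $\eta\in(0,1)$ for which $-\delta^{\eta}$ is plurisubharmonic on some one-sided neighbourhood of $p$ in $\Omega$, equivalently for which the Hermitian $(1,1)$-form
\[
\omega_{\eta}\eqdef -i\d'\dbar\delta+(1-\eta)\,\frac{i\d'\delta\wedge\dbar\delta}{\delta}
\]
is positive semi-definite there. Computing the matrix entries of $\omega_{\eta}$ in the frame $\{\pa/\pa z_{1},\dots,\pa/\pa z_{n-1},\pa/\pa w\}$ with $\delta=\lambda y$ exhibits three distinct asymptotic regimes as $y\to 0$: the transverse entry blows up, $(\omega_{\eta})_{w\bar w}\sim(1-\eta)\lambda/(4y)\to+\infty$; the leaf-tangential entries $(\omega_{\eta})_{z_{j}\bar z_{k}}$ vanish to order $y$; and the mixed entries $(\omega_{\eta})_{z_{j}\bar w}$ approach the non-vanishing limit $-i\eta\lambda_{z_{j}}/2$. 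Since the transverse block dominates, positivity of $\omega_{\eta}$ near $p$ reduces, via the Schur criterion, to positivity of the Schur complement with respect to that block.

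Carrying out that Schur complement $S$ and simplifying using $(1-\eta)-\eta^{2}/(1-\eta)=(1-2\eta)/(1-\eta)$ together with the identity $\lambda_{z_{j}\bar z_{k}}/\lambda-\lambda_{z_{j}}\lambda_{\bar z_{k}}/\lambda^{2}=(\log\lambda)_{z_{j}\bar z_{k}}$, one finds that $S$ vanishes to first order in $y$ at $p$ with leading coefficient $\lambda(p)\,\Xi_{\eta}(p)$, where
\[
\Xi_{\eta}(p)\eqdef -i\d'_{b}\dbar_{b}\log\lambda(p)-\frac{\eta}{1-\eta}\,i\d'_{b}\log\lambda\wedge\dbar_{b}\log\lambda(p).
\]
By the normalization of the first paragraph, $\Xi_{\eta}(p)=i\Theta_{h_{\delta}}(p)-\frac{\eta}{1-\eta}iA_{h_{\delta}}(p)$. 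Strict positivity of $\Xi_{\eta}(p)$ propagates by continuity to a one-sided neighbourhood of $p$ and, combined with the automatic positivity of the transverse block there, yields $\omega_{\eta}>0$ on that neighbourhood; conversely $\Xi_{\eta}(p)\not\geq 0$ obstructs semi-definiteness of $\omega_{\eta}$ nearby. Taking the supremum over admissible $\eta$ yields the claimed expression.

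The main technical obstacle is the Schur-complement bookkeeping: one has to track the $O(y)$ corrections in $(\omega_{\eta})_{z_{j}\bar w}=-i\eta\lambda_{z_{j}}/2+O(y)$ and in the expansion of $(\omega_{\eta})_{w\bar w}^{-1}$ carefully enough to isolate the leading $y$-coefficient of $S$ and verify that these corrections enter only in higher-order remainders. A secondary delicate point is pinning down the normalization $\log h_{\delta}\equiv\log\lambda$ from Brunella's construction: it is exactly this scaling, rather than any scalar multiple, that makes the factor $\eta/(1-\eta)$ emerge verbatim in the statement rather than some other rational function of $\eta$.
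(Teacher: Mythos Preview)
Your overall strategy---compute the complex Hessian of $-\delta^\eta$, isolate the divergent transverse entry, reduce via the Schur complement to a leaf-tangential condition, and take the boundary limit to obtain $i\Theta_{h_\delta}(p)-\frac{\eta}{1-\eta}iA_{h_\delta}(p)$---is exactly the paper's. The gap is in your first paragraph: you assume holomorphic coordinates $(z',w)$ in which $M=\{y=0\}$ and every leaf is $\{w=\mathrm{const}\}$. Such coordinates need not exist when $M$ is merely $\Cont^3$. Holomorphically flattening $M$ to a real hyperplane already forces $M$ to be real-analytic; and having each leaf be a level set of a single holomorphic $w$ amounts to assuming the Levi foliation extends to a holomorphic foliation of a neighbourhood of $M$---an extra hypothesis present in Brunella's direction but not here. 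Under those additional assumptions your factorization $\delta=\lambda y$ and the asymptotics you derive are correct, but as written you have assumed away the main analytic difficulty that the $\Cont^3$ hypothesis is meant to accommodate.

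The paper handles this via a \emph{distinguished parametrization}: holomorphic ambient coordinates $(z',z_n)$ in which only the leaf $L_p$ through $p$ is flattened, $L_p\cap U=\{z_n=0\}$, and $T_qM=\{y_n=0\}$ for every $q\in L_p\cap U$ (so $M$ osculates the hyperplane $\{y_n=0\}$ along $L_p$ but is not equal to it). The foliated chart $\varphi(z',t)=(z',\zeta(z',t))$ is then $\Cont^3$, holomorphic only in $z'$, with $\zeta(z',0)=0$ and $\frac{\d'\zeta}{\d't}(z',0)=1$. One approaches $p$ only along the normal segment $\{(0',iy_n):0<y_n\ll 1\}$ and uses the $\Cont^2$ extension of $\delta(z',iy_n)/y_n$ to $h_\delta(z',0)=\frac{\d'\delta}{\d' y_n}(z',0)$ to pass to the limit in the suitably rescaled Hessian entries. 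After that, the Schur-type reduction (Lemma~\ref{posidef}: positivity of $H$ is equivalent to $c>0$ and $cA-b\,\overline{{}^t b}>0$) and the algebra producing the factor $\eta/(1-\eta)$ proceed exactly as in your sketch. Your argument would go through once you replace the flat model by this parametrization and restrict the limiting computation to the normal line.
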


In particular, we have

\begin{Cor}
Suppose $\dim_\C X =2$ and $\Omega, M, \delta$ and $h_\delta$ as in Theorem \ref{localformula}. 
Then, the local Diederich--Fornaess exponent $\eta_\delta(p)$ is determined by 
the ratio of $i\Theta_{h_\delta}(p)$ and $iA_{h_\delta}(p)$, namely, 
\[
\eta_\delta(p) =  \left(1 + \frac{iA_{h_\delta}(p)}{i\Theta_{h_\delta}(p)}\right)^{-1}.
\]
\end{Cor}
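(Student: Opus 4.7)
The plan is to reduce the condition in Theorem \ref{localformula} to a scalar inequality using the fact that, when $\dim_\C X = 2$, the Levi foliation has leaves of complex dimension one, so that the fibre $\wedge^{1,1}(T^{1,0}\mathcal{F})^*_p$ of the bundle of leafwise $(1,1)$-forms at any point $p \in M$ is a one-dimensional real vector space. In a leafwise holomorphic coordinate $z$ near $p$, every leafwise $(1,1)$-form at $p$ can thus be written as a real multiple of the positive form $i\,dz \wedge d\bar z$.

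First I would write $\d'_b \log h_\delta = f\, dz$ at $p$ for some $f \in \C$, so that
\[
iA_{h_\delta}(p) = i\,\d'_b\log h_\delta \wedge \dbar_b\log h_\delta = |f|^2\, i\,dz \wedge d\bar z \geq 0,
\]
which shows that $iA_{h_\delta}(p)$ is automatically a nonnegative $(1,1)$-form in this setting; similarly write $i\Theta_{h_\delta}(p) = \Theta\, i\,dz \wedge d\bar z$ and $iA_{h_\delta}(p) = A\, i\,dz \wedge d\bar z$ with $\Theta \in \R$ and $A \geq 0$. Since $\eta_\delta(p) > 0$ by hypothesis, there exists some $\eta \in (0,1)$ for which $\Theta - \frac{\eta}{1-\eta} A > 0$, which forces $\Theta > 0$.

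Next I would solve the scalar inequality $\Theta - \frac{\eta}{1-\eta}A > 0$ for $\eta \in (0,1)$. In the non-degenerate case $A > 0$, this is equivalent to $\frac{\eta}{1-\eta} < \Theta/A$; a direct algebraic manipulation yields the equivalent form $\eta < (1 + A/\Theta)^{-1}$, so the supremum over admissible $\eta$ is exactly $(1 + A/\Theta)^{-1} = (1 + iA_{h_\delta}(p)/i\Theta_{h_\delta}(p))^{-1}$, where the ratio is interpreted as the ratio of the scalar components and is well defined independent of the chosen coordinate $z$. In the degenerate case $A = 0$, the inequality holds for every $\eta \in (0,1)$, so the supremum is $1$, again matching the formula.

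No serious obstacle is anticipated, since the corollary is essentially a one-variable specialisation of Theorem \ref{localformula}; the only point requiring care is to verify that the ratio $iA_{h_\delta}(p)/i\Theta_{h_\delta}(p)$ is intrinsically defined as a real number, which follows from the one-dimensionality of the leafwise $(1,1)$-forms at $p$ together with the positivity of both forms on the leaf direction.
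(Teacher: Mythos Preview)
Your proposal is correct and is precisely the argument the paper leaves implicit: the Corollary is stated without a separate proof, as an immediate specialisation of Theorem~\ref{localformula} to the case where the leafwise $(1,1)$-forms are one-dimensional. The only minor remark is that the positivity $\Theta>0$ can also be read off directly from Proposition~\ref{converse} rather than via the formula, but your deduction through $\eta_\delta(p)>0$ is equally valid since $\eta_\delta = \min_{p}\eta_\delta(p)>0$ under the standing hypothesis.
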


\bigskip

As an application of this local formula, we give an observation  
that indicates a connection between 
boundary distance functions with constant local Diederich--Fornaess exponent and 
conformal harmonic measures with smooth density.

\begin{Prop}
\label{relation}
Let $X$ be a complex surface and $\Omega \Subset X$ a relatively compact domain 
with $\Cont^{3}$-smooth Levi-flat boundary $M$.
We fix a continuous leafwise hermitian metric on the Levi foliation. 
Suppose that $\Omega$ possesses a boundary distance function $\delta$ 
whose local Diederich--Fornaess exponent $\eta_\delta(p)$ is constant equal to $\eta$ on 
a compact saturated set $\mathcal{M} \subset M$ of the Levi foliation.
Then, $\mathcal{M}$ admits a conformal harmonic measure with exponent $\alpha = (\eta^{-1} - 1)^{-1}$
if transversals of $\mathcal{M}$ has positive finite Hausdorff measure of dimension exactly $\alpha$.
\end{Prop}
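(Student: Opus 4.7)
The plan is to combine the above Corollary, which is the $\dim_{\C}X=2$ case of Theorem \ref{localformula}, with a short PDE manipulation and a transverse construction. The constancy $\eta_\delta \equiv \eta$ on $\mathcal{M}$ forces a specific power of $h_\delta$ to be leafwise harmonic, after which the desired conformal harmonic measure can be assembled as the product of that harmonic leafwise density with the $\alpha$-dimensional Hausdorff measure on transversals of $\mathcal{M}$.

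First, I would apply the Corollary pointwise on $\mathcal{M}$: the hypothesis $\eta_\delta(p)=\eta$ rewrites as $iA_{h_\delta}(p)/i\Theta_{h_\delta}(p) = \eta^{-1}-1 = \alpha^{-1}$, equivalently $i\Theta_{h_\delta} = \alpha\, iA_{h_\delta}$ on $\mathcal{M}$. Unpacking the definitions of the two curvature terms, this is the leafwise Riccati-type equation
\[
  \d'_b\dbar_b \log h_\delta + \alpha\,\d'_b\log h_\delta \wedge \dbar_b \log h_\delta = 0.
\]
A short Leibniz computation then yields $\d'_b\dbar_b(h_\delta^\alpha)=0$ on $\mathcal{M}$, so the (frame-dependent) local representative $h_\delta^\alpha$ is harmonic on each plaque of the Levi foliation restricted to $\mathcal{M}$.

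Second, I would assemble the candidate measure. On a foliation chart $\mathcal{U}\cong P\times T$ of $M$ adapted to $\mathcal{M}$, set
\[
  \mu\big|_\mathcal{U} \eqdef h_\delta^\alpha\,\omega_g \otimes \mathcal{H}^\alpha\big|_{T\cap\mathcal{M}},
\]
where $\omega_g$ is the leafwise area form from the fixed continuous leafwise hermitian metric and $\mathcal{H}^\alpha$ is the $\alpha$-dimensional Hausdorff measure on the transversal (nontrivial and finite there by hypothesis). By the previous step the plaque density is leafwise harmonic, so once $\mu$ is shown to glue to a global Radon measure on $\mathcal{M}$ it will automatically be a harmonic measure in the sense of Garnett. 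It then remains to verify the gluing on chart overlaps together with the transverse conformal transformation rule of exponent $\alpha$; both follow from the observation that, along the Levi foliation of a complex surface, the transition cocycle of $N^{1,0}$ between overlapping foliation charts coincides, in modulus, with the derivative cocycle of the holonomy pseudogroup on transversals, so the frame rescaling $|\gamma|^{2\alpha}$ of $h_\delta^\alpha$ is precisely compensated by the $|J_h|^\alpha$ rescaling of $\mathcal{H}^\alpha$.

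The hard part will be this last compatibility. I expect one must pin down the canonical local frame of $N^{1,0}$ dual to $\d'_b\delta$ in each chart so that its transition cocycle numerically realises the transverse holonomy Jacobian, and then invoke the transformation rule of $\mathcal{H}^\alpha$ under $\Cont^1$ maps between sets of exact Hausdorff dimension $\alpha$ to justify the $\alpha$-power law on transversals --- this is precisely where the hypothesis on the transverse Hausdorff dimension is consumed. Once these compatibilities are in place, $\mu$ is a well-defined, globally harmonic, $\alpha$-conformal measure on $\mathcal{M}$, giving the desired conformal harmonic measure of exponent $\alpha$.
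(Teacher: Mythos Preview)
Your proposal is correct and follows essentially the same route as the paper: derive $i\Theta_{h_\delta}=\alpha\,iA_{h_\delta}$ on $\mathcal{M}$ from the constancy of $\eta_\delta$, deduce $\d'_b\dbar_b(h_\delta^{\alpha})=0$ along the plaques, and assemble $\mu$ locally as $h_\delta^{\alpha}\,\mathrm{vol}_\omega\otimes\mathcal{H}^\alpha$, with the gluing coming from the fact that $h_\delta^{\alpha}$ is a metric on $(N^{1,0})^{\otimes\alpha}$ whose transition cocycle is $|dt_V/dt_U|^{\alpha}$, exactly cancelling the conformal rescaling of $\mathcal{H}^\alpha$. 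One small slip: in this paper's convention $h_\delta$ is the \emph{norm}, not the squared norm (see the footnote to Proposition~\ref{construction}), so $h_\delta^{\alpha}$ rescales by $|dt_V/dt_U|^{\alpha}$, not $|\gamma|^{2\alpha}$; with that correction the compensation against $\mathcal{H}^\alpha$ is exact and the gluing goes through without further work.
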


\bigskip

The organization of this paper is as follows. 
In \S\ref{Sprelim}, we give the definition of the local Diederich--Fornaess exponent 
and explain how to construct the hermitian metric $h_\delta$ which appeared in the statement of Theorem \ref{localformula}.
In \S\ref{Slocalformula}, we show that the converse of Brunella's theorem actually holds in a sense
and prove Theorem \ref{localformula}. 
Their proofs are by direct computations in a suitable coordinate, 
which is referred to as a distinguished parametrization in this paper 
and based on Diederich and Fornaess \cite{diederich-fornaess-worm} and Barrett and Fornaess \cite{barrett-fornaess1988}. 
In \S\ref{Srelation}, after recalling the notion of conformal harmonic measure, 
we prove Proposition \ref{relation} and illustrate it by an example.

\section{Preliminaries}
\label{Sprelim}

\subsection{The Diederich--Fornaess exponent}
Let us begin by recalling the definition of the Diederich--Fornaess exponent. 

Let $X$ be a complex manifold and $\Omega \Subset X$ a relatively compact domain with $\Cont^2$-smooth boundary.
We say that a $\Cont^2$-smooth function $\delta: \overline{\Omega} \to \R$ is 
a {\em boundary distance function} of $\Omega$ if $\Omega = \{\delta > 0 \}$ and $d\delta \neq 0$ on $\bd \Omega$. 

\begin{Def}
We denote by $\eta_\delta$ the supremum of $\eta \in (0, 1)$ such that 
$-\delta^{\eta}$ is strictly plurisubharmonic in $\Omega$ except a compact subset, 
and call it the {\em Diederich--Fornaess exponent} (in the weak sense) of $\delta$. 
If there is no such $\eta$, we let $\eta_\delta = 0$.
\end{Def}

Note that we usually require $-\delta^{\eta}$ to be strictly plurisubharmonic everywhere in $\Omega$ 
to define the Diederich--Fornaess exponent, but in this paper we will consider 
the Diederich--Fornaess exponent in this weak sense; we will omit mentioning that it is in the weak sense hereafter.

\bigskip

We denote by $\eta(\Omega)$ the supremum of the Diederich--Fornaess exponents 
of boundary distance functions of $\Omega$ and call it the {\em Diederich--Fornaess index} of $\Omega$.

\begin{Ex}
$\eta(\Omega) = 1$ for any domain $\Omega \Subset X$ with strongly pseudoconvex boundary. 
Because we can find a strictly plurisubharmonic defining function of $\bd\Omega$.
\end{Ex}

\begin{Ex}[Diederich and Fornaess \cite{diederich-fornaess1977}] 
$\eta(\Omega) > 0 $ for any $\Cont^2$-smoothly bounded pseudoconvex domains $\Omega \Subset X$ whenever $X$ is Stein.
\end{Ex}

Ohsawa and Sibony gave a characterization for $\Omega$ to have positive $\eta(\Omega)$. 

\begin{Thm}[Ohsawa and Sibony \cite{ohsawa-sibony1998}; see also Harrington and Shaw \cite{harrington-shaw2007}]
Let $X$ be a complex manifold and $\Omega \Subset X$ a relatively compact domain with $\Cont^2$-smooth boundary.
Then, a boundary distance function $\delta$ satisfies $\eta_\delta > 0$ 
if and only if $i\d'\dbar (-\log \delta) \geq \omega$ in $\Omega$ except a compact subset for some hermitian metric $\omega$ of $X$; 
in other words, $\eta(\Omega) > 0$ if and only if $\Omega$ is Takeuchi 1-convex. 
\end{Thm}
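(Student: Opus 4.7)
The pivot of both directions is the $\partial\bar\partial$-identity
\[
i\partial\bar\partial(-\delta^\eta) \;=\; \eta\,\delta^\eta\,\bigl[A - \eta B\bigr], \qquad \eta\in(0,1),
\]
with $A \eqdef i\partial\bar\partial(-\log\delta)$ and $B \eqdef i\partial(-\log\delta)\wedge\bar\partial(-\log\delta)$ a rank-one positive semidefinite form; thus $-\delta^\eta$ is strictly plurisubharmonic at a point if and only if $A-\eta B>0$ there. The identity $A = -i\partial\bar\partial\delta/\delta + B$ exhibits $A$ as a sum of a tangential piece of size $O(1/\delta)$ and the normal piece $B$ of size $O(1/\delta^2)$ along $\partial\delta$; this asymptotic dominance of $B$ near $\partial\Omega$ drives both implications.

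For the backward direction, assume $A\geq\omega$ outside a compact $K$. At each point, the rank-one perturbation $B=\xi\xi^*$ with $\xi=\partial(-\log\delta)$ makes the condition $A-\eta B>0$ equivalent to the scalar inequality $\eta\,\xi^*A^{-1}\xi<1$ by the matrix determinant lemma. In the normal/tangential block decomposition $A=H+\xi\xi^*$ with $H=-i\partial\bar\partial\delta/\delta$, the Schur complement formula gives
\[
\xi^*A^{-1}\xi \;=\; \frac{|\xi|^2}{T+|\xi|^2}, \qquad T = H_{nn} - b^*(H')^{-1}b,
\]
where $H'$ and $b$ are the tangential block and off-diagonal column of $H$ at the normal slot. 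The hypothesis $A\geq\omega$ yields $T+|\xi|^2\geq 1$, so $\xi^*A^{-1}\xi$ is uniformly bounded on $\Omega\setminus K$; any sufficiently small $\eta\in(0,1)$ then makes $-\delta^\eta$ strictly plurisubharmonic outside a compact set.

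For the forward direction, $-\delta^{\eta_0}$ strictly plurisubharmonic outside $K$ gives $A-\eta_0 B>0$ there. A direct check (the function $-(-v)^t$ is strictly plurisubharmonic whenever $v<0$ is, for $t\in(0,1)$) shows that the admissible exponents form an interval $(0,\eta_\delta)$, so I may also pick $\eta_1\in(0,\eta_0)$ and enjoy $A-\eta_1 B>0$ with slack $(\eta_0-\eta_1)B$ in the normal direction. Positivity of this block quadratic form, together with a Schur complement computation in the normal variable, converts the slack into a uniform quantitative lower bound on the tangential restriction of $A$. Combined with the automatic $O(1/\delta^2)$ normal contribution of $B$, this yields a uniform hermitian bound $A\geq\omega$ near $\partial\Omega$; compactness handles the complement. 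The forward direction is the subtler step, whose crux is the extraction of quantitative tangential positivity from the slack $\eta_0-\eta_1$ via the normal--tangential coupling encoded in the block decomposition of $A$.
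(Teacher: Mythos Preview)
The paper does not give its own proof of this statement: it is quoted as a result of Ohsawa--Sibony (with a reference to Harrington--Shaw for an alternative treatment) and is used later only through its forward implication in the proof of Proposition~\ref{converse}. So there is no argument in the paper to compare your sketch against; I can only evaluate your outline on its own.

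Your pivot identity $i\d'\dbar(-\delta^\eta)=\eta\delta^\eta(A-\eta B)$ and the reduction of positive definiteness to the scalar condition $\eta\,\xi^*A^{-1}\xi<1$ are correct and are indeed the natural starting point. The forward direction is plausible but left vague: you assert that the slack $(\eta_0-\eta_1)B$ ``converts'' via a Schur complement into a uniform tangential lower bound on $A$, yet $B$ has no tangential component in your own block decomposition, so it is not clear what mechanism produces a bound $A'\geq cI$ that is uniform as one approaches $\bd\Omega$. Some limiting/compactness argument on $\overline\Omega$ is needed here and should be spelled out.

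The backward direction has a genuine gap. From $A\geq\omega$ you correctly obtain $T+|\xi|^2\geq 1$ for the Schur complement $T$ of $H$. But the next step, ``so $\xi^*A^{-1}\xi=|\xi|^2/(T+|\xi|^2)$ is uniformly bounded on $\Omega\setminus K$,'' does not follow: near $\bd\Omega$ one has $|\xi|^2\sim c/\delta^2\to\infty$, while $T+|\xi|^2\geq 1$ only guarantees the denominator is at least $1$. In fact $T$ can be negative (nothing forces $H=-i\d'\dbar\delta/\delta\geq 0$), and then
\[
\frac{|\xi|^2}{T+|\xi|^2}=1+\frac{-T}{T+|\xi|^2}
\]
can exceed $1$; the question is precisely whether $-T/|\xi|^2$ stays bounded below $1$ uniformly, which your argument never establishes. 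Equivalently, the condition you need is $T>-(1-\eta)|\xi|^2$ for some fixed $\eta\in(0,1)$, and $T+|\xi|^2\geq 1$ alone does not give this. The actual proofs in the cited references control the off-diagonal coupling $b$ more carefully (using, in effect, that the full form $A-I$ is positive semidefinite, not merely its diagonal blocks) to close this estimate; your Schur-complement bookkeeping stops one step short of that.
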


\begin{Rem}
Ohsawa proved in \cite{ohsawa2007} that $\eta(\Omega) = 0$ for $\Omega$ with $\Cont^\infty$-smooth Levi-flat boundary
whenever $X$ is a compact K\"ahler manifold of $\dim_\C X \geq 3$.
Nevertheless, in some compact K\"ahler surface $X$, $\eta(\Omega)$ can be positive even if $\Omega$ has Levi-flat boundary. 
See \cite{diederich-ohsawa2007} and \cite{adachi2014}.
\end{Rem}

To compare the Diederich--Fornaess exponent of $\Omega$ with certain curvatures of the normal bundle of the Levi-flat boundary $\bd\Omega$, 
we need the following pointwise notion regarding the Diederich--Fornaess exponent.

\begin{Def}
Let $\delta$ be a boundary distance function of $\Omega$. 
For each $p \in \bd\Omega$, we denote by $\eta_\delta(p)$ the supremum of $\eta \in (0, 1)$ such that 
$-\delta^\eta$ is strictly plurisubharmonic on $U \cap \Omega$ where $U$ runs over neighborhoods of $p$ in $X$, 
and call it \emph{the local Diederich--Fornaess exponent} of $\delta$ at $p \in \bd \Omega$. 
If there is no such $\eta$, we let $\eta_\delta(p) = 0$.
\end{Def}

Note that the local Diederich--Fornaess exponent $\eta_\delta(p)$ is a lower semi-continuous function on $\bd \Omega$
from its definition and $\eta_\delta = \min_{p \in \bd\Omega} \eta_\delta(p)$.

\begin{Rem}
Herbig and McNeal \cite{herbig-mcneal2012} gave a lower estimate of the local Diederich--Fornaess exponent
of the Euclidean boundary distance function of $\Omega \Subset \C^n$ using 
the angle of a certain cone in the real tangent space $T_p\bd\Omega$. 
We remark that their lower estimate turns into a dichotomy at weakly pseudoconvex points, 
where $\gamma$-plurisubharmonicity holds either only for $\gamma = 0$ or for any $\gamma \geq 0$
(cf. \cite[Remark 6.2 (b)]{herbig-mcneal2012}).
\end{Rem}

\begin{Rem}
Biard \cite[Th\'eor\`eme 2.12]{biardthese} gave a lower estimate of the local Diederich--Fornaess exponent
using a function called $\tau$. Our Theorem \ref{localformula} can be regarded as a refinement of her result 
in terms of boundary invariants when $\bd \Omega$ is $\Cont^3$-smooth Levi-flat. 
\end{Rem}

\begin{Rem}
Fu and Shaw \cite{fu-shaw2014} gave other lower estimates of 
the local Diederich--Fornaess exponent in terms of several geometric quantities.  
\end{Rem}

\subsection{The normal bundle}

Let us explain how to make a hermitian metric of the normal bundle
of a Levi-flat real hypersurface from a boundary distance function of its complement. 

From now on, we work with $\Omega \Subset X$ with $\Cont^3$-smooth Levi-flat boundary $M$. 
We denote the {\em holomorphic tangent bundle} of $M$ by $T\mathcal{F} \eqdef TM \cap J_X TM$ 
where $J_X$ denotes the complex structure of $X$. We identify the subbundle $T\mathcal{F} \subset TX|M$ with 
$T^{1,0}M \subset T^{1,0}X|M$ via the standard identification. The bundle $T^{1,0}M$ is called the {\em CR structure} of $M$.
The Levi-flatness of $M$ implies the integrability of $T\mathcal{F}$ in the sense of Frobenius, 
and defines a foliation $\mathcal{F}$ of $M$ by non-singular complex hypersurfaces 
called the {\em Levi foliation}. 
The foliation $\mathcal{F}$ is not only $\Cont^2$-smooth as expected from Frobenius' theorem 
but also $\Cont^3$-smooth according to a theorem of Barrett and Fornaess \cite{barrett-fornaess1988}.

\begin{Def}
We call $N^{1,0} \eqdef \C \otimes (TM/T\mathcal{F})$ the {\em normal bundle} of a Levi-flat real hypersurface $M$.
\end{Def}

The normal bundle $N^{1,0}$ is an intrinsic notion on Levi-flat real hypersurfaces, 
and the trivialization cover of $N^{1,0}$ coming from foliated charts of $M$ shows that $N^{1,0}$ is a leafwise flat $\C$-bundle.
Note that $N^{1,0}$ is different from the normal bundle $N_{M/X}$ of a real hypersurface $M$ in $X$; 
the connection of $N^{1,0}$ with the ambient space $X$ can be described by an isomorphism $N^{1,0} \simeq (T^{1,0}X|M) / T^{1,0}M$.
This expression clearly shows that $N^{1,0}$ is a CR line bundle. \\

Now let $\delta$ be a $\Cont^3$-smooth boundary distance function of $\Omega$. 
We take an arbitrary foliated chart with parametrization $\varphi: \C^{n-1}\times \R \supset V \to U \subset M$.
Note that $\varphi(z',t)$ is $\Cont^3$-smooth and holomorphic in $z'$.
We define a $\Cont^2$-smooth positive function $h_\delta$ on $V$ by
\[
h_\delta(z', t) \eqdef \left| \left(J_X \varphi_*\frac{\d'}{\d't}\right) \delta \right|.
\]

\begin{Prop}
\label{construction}
The function $h_\delta$ defines a hermitian metric\footnote{In this paper, we use a convention where $h_\delta$ measures the norm of a normal vector, not its squared norm.} of $N^{1,0}$. 
\end{Prop}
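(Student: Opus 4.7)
The plan is to verify that $h_\delta$ is well-defined and positive on each foliated chart, and that under a change of foliated chart it transforms by the same factor as the natural local frame of $N^{1,0}$.

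For positivity, I would observe that $J_X \varphi_*(\partial/\partial t)$ is transverse to $M$ at every point of $U$. Indeed, $\varphi_*(\partial/\partial t) \in TM$ is transverse to $T\mathcal{F}$; if $J_X \varphi_*(\partial/\partial t)$ were also in $TM$, it would lie in $TM \cap J_X TM = T\mathcal{F}$, and applying $-J_X$ again would force $\varphi_*(\partial/\partial t) \in T\mathcal{F}$, a contradiction. Since $d\delta \neq 0$ on $M = \{\delta = 0\}$, every derivative of $\delta$ in a direction transverse to $M$ is nonzero, so $h_\delta > 0$ on $V$.

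Next, I would compute the transition law. Any two foliated parametrizations are related by $\tilde z'_j = f_j(z', t)$ (holomorphic in $z'$) and $\tilde t = g(t)$, the latter because changes of chart must carry leaves to leaves. A routine change-of-variables computation in the complexified tangent space yields
\[
\varphi_*\frac{\partial}{\partial t} = \sum_j \frac{\partial f_j}{\partial t}\, \tilde\varphi_*\frac{\partial}{\partial \tilde z'_j} + \sum_j \frac{\partial \bar f_j}{\partial t}\, \tilde\varphi_*\frac{\partial}{\partial \tilde{\bar z}'_j} + g'(t)\, \tilde\varphi_*\frac{\partial}{\partial \tilde t}.
\]
Applying $J_X$ multiplies the first sum by $i$ and the second by $-i$, but those vectors lie in $T^{1,0}M \oplus T^{0,1}M$ and hence annihilate $\delta$. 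Taking the remaining terms gives $(J_X \varphi_*\partial/\partial t)\delta = g'(t)\,(J_X \tilde\varphi_*\partial/\partial \tilde t)\delta$, and passing to absolute values yields $h_\delta^\varphi = |g'(t)|\, h_\delta^{\tilde\varphi}$.

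Finally, to identify this with a hermitian metric on $N^{1,0}$, I would read the displayed decomposition modulo $\C\otimes T\mathcal{F} = T^{1,0}M \oplus T^{0,1}M$: the local frames $e_\varphi := [\varphi_*(\partial/\partial t)]$ and $e_{\tilde\varphi} := [\tilde\varphi_*(\partial/\partial \tilde t)]$ of $N^{1,0}$ then satisfy $e_\varphi = g'(t)\, e_{\tilde\varphi}$. Since $g'(t) \in \R\setminus\{0\}$, the transition $h_\delta^\varphi = |g'(t)|\, h_\delta^{\tilde\varphi}$ is exactly the rule required so that the prescription $\|e_\varphi\|_{h_\delta} := h_\delta^\varphi$ defines a consistent norm, with $\Cont^2$-smoothness inherited from $\delta$ and $\varphi$. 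The main obstacle is the change-of-variables computation in the previous step: one must carefully disentangle the real transverse direction from the complex leaf directions, making essential use of the fact that $\tilde\varphi_*(\partial/\partial \tilde z'_j)$ and $\tilde\varphi_*(\partial/\partial \tilde{\bar z}'_j)$ are complex tangent to $M$ so that $\delta$ kills every contribution except the clean $g'(t)$ term.
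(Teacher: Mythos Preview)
Your proposal is correct and follows essentially the same approach as the paper: decompose $\varphi_*(\partial/\partial t)$ in the second chart, apply $J_X$, and observe that the leaf-tangent pieces annihilate $\delta$, leaving only the clean $g'(t)$ (the paper's $dt_\beta/dt_\alpha$) factor. The only cosmetic differences are that the paper carries out the chain rule in the real coordinates $x_j, y_j$ rather than in the complexified $(1,0)/(0,1)$ splitting, and it omits the explicit positivity check you include at the start.
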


\begin{proof}
Take two intersecting foliated charts and consider their parametrizations for the intersection $U$,  
say $\varphi_k: \C^{n-1}\times \R \supset V_k \to U \subset M$ $(k = \alpha, \beta)$.
We denote their transition function by $\psi = (\varphi_\beta)^{-1} \circ \varphi_\alpha: V_\alpha \to V_\beta$ and 
denote $\psi(z'_\alpha, t_\alpha) = (z'_\beta(z'_\alpha, t_\alpha), t_\beta(t_\alpha))$. By the chain rule, 
\[
\psi_*\frac{\d'}{\d't_\alpha} = \sum_{j=1}^{n-1} \left(\frac{\d'x_j}{\d't_\alpha} \frac{\d'}{\d'x_j} + \frac{\d'y_j}{\d't_\alpha} \frac{\d'}{\d'y_j}\right)+ \frac{dt_\beta}{dt_\alpha} \frac{\d'}{\d't_\beta}
\]
where we simply denote $(z'_\beta)_j = x_j + i y_j$. We have  
\begin{align*}
\left(J_X (\varphi_\alpha)_*\frac{\d'}{\d't_\alpha}\right) \delta
& = \left(J_X (\varphi_\beta\circ \psi)_* \frac{\d'}{\d't_\alpha}\right) \delta  \\
& = \sum_{j=1}^{n-1} (\varphi_\beta)_* \left(\frac{\d'x_j}{\d't_\alpha} \frac{\d'}{\d'y_j} - \frac{\d'y_j}{\d't_\alpha} \frac{\d'}{\d'x_j}\right)\delta + \frac{dt_\beta}{dt_\alpha} \left(J_X (\varphi_\beta)_* \frac{\d'}{\d't_\beta}\right) \delta \\
& = \frac{dt_\beta}{dt_\alpha} \left(J_X (\varphi_\beta)_* \frac{\d'}{\d't_\beta}\right) \delta 
\end{align*}
since $(\varphi_\beta)_* \frac{\d'}{\d'x_j}, (\varphi_\beta)_* \frac{\d'}{\d'y_j}$ are tangent to $M = \{ \delta = 0 \}$. 
This equality exactly shows that $h_\delta$ defines a hermitian metric of $N^{1,0}$.
\end{proof}

\section{Brunella's correspondence and its quantitative version} 
\label{Slocalformula}

\subsection{Distinguished parametrizations}
We will introduce first a special coordinate, which is essential to prove Proposition \ref{converse} and Theorem \ref{localformula}. 

We continue to work with $\Omega \Subset X$ with $\Cont^3$-smooth Levi-flat boundary $M$ 
and denote the Levi foliation by $\mathcal{F}$.
Take a point $p \in M$ and consider a holomorphic coordinate $(U; z=(z_1, z_2, \cdots, z_n) )$ around $p$.
We write $z_j = x_j + i y_j$ and identify $p$ with the origin.  
We can easily see that the holomorphic coordinate can be chosen so that
$L_p \cap U = \{ z_n = 0 \}$,
$T_p M \simeq \{ y_n = 0\}$ and $\frac{\d'}{\d' y_n}$ is an inner normal vector for $\Omega$ at $p$ 
where $L_p$ denotes the leaf of $\mathcal{F}$ passing through $p$.
Furthermore, we can make the holomorphic coordinate to satisfy 
$T_q M \simeq \{ y_n = 0 \}$ for any $q \in L_p \cap U$
thanks to the argument used in Diederich and Fornaess \cite{diederich-fornaess-worm}*{Theorem 8} 
and Barrett and Fornaess \cite{barrett-fornaess1988}*{Proposition}. 

Since $\mathcal{F}$ is $\Cont^3$-smooth, 
we have a foliated chart of $\mathcal{F}$ around $p$ 
whose parametrization $\varphi$ into the holomorphic coordinate $U$ is described as $\varphi(z', t) =  (z', \zeta(z', t))$
where $\zeta: \C^{n-1}\times \R \supset V \to \C$ is a $\Cont^3$-smooth function defined near $(0',0)$
that is holomorphic in $z'$ and $\Re \zeta(0',t) = t$. 
Note that $\zeta(z',0) = 0$ and $\frac{\d'\zeta}{\d't}(z', 0) = 1$ from the choice of the holomorphic coordinate.
The latter follows from $\Im \frac{\d'\zeta}{\d't}(z', 0) = 0 $, the holomorphicity of $\frac{\d'\zeta}{\d't}$ in $z'$ and $\frac{\d'\zeta}{\d't}(0', 0) = \frac{\d'\Re \zeta}{\d't}(0', 0) = 1$. 
We refer to this parametrization $\varphi$ in the coordinate $(U,z)$ as a {\em distinguished parametrization} around $p$.

\subsection{Curvatures of the normal bundle}
To state Proposition \ref{converse} and Theorem \ref{localformula} precisely, we introduce two notions of curvature of the normal bundle.

We denote by $\d'_b$ (resp. $\dbar_b$) the (1,0)-part (resp. (0,1)-part) of the exterior derivative along the leaves, 
namely, 
\[
\d'_b f = \sum_{j=1}^{n-1} \frac{\d' f}{\d' z_j} dz_j, \quad \dbar_b f = \sum_{j=1}^{n-1} \frac{\d' f}{\d' \zbar_j} d\zbar_j
\]
for a function $f$ on a foliated chart with coordinate $(z'=(z_1, z_2, \cdots, z_{n-1}), t)$. 
The operator $\dbar_b$ is also known as the tangential Cauchy--Riemann operator.

\begin{Def} For a (leafwise) $\Cont^2$-smooth hermitian metric $h$ of $N^{1,0}$, 
we let 
$i\Theta_{h} \eqdef -i\d'_b\dbar_b \log h$ and call it {\em the Chern curvature of $h$ along the leaves}. 
We also denote $iA_{h} \eqdef i\d'_b\log h \wedge \dbar_b \log h$.

We say $N^{1,0}$ is {\em positive along the leaves}  
if $i\Theta_{h}$ defines a positive definite quadratic form on $T^{1,0}M$ everywhere for some $h$. 
\end{Def}

Note that the leafwise flatness of $N^{1,0}$ assures the well-definedness of $iA_h$;
on the other hand, the Chern curvature along the leaves can be defined in fact 
for arbitrary CR line bundle over a Levi-flat real hypersurface equipped with a (leafwise) $\Cont^2$-smooth hermitian metric.

\subsection{Brunella's correspondence}
Now we rephrase a theorem of Brunella in our context by using Ohsawa--Sibony's theorem. 
\begin{Thm}[Brunella \cite{brunella2008}]
Let $X$ be a complex manifold and $\Omega \Subset X$ a relatively compact domain 
with $\Cont^{2,\alpha}$-smooth Levi-flat boundary $M$ where $\alpha > 0$. 
Suppose that $M$ is a saturated set of a non-singular holomorphic foliation defined near $M$.
If the normal bundle $N^{1,0}$ is positive along the leaves, 
there is a boundary distance function $\delta$ of $\Omega$ with positive Diederich--Fornaess exponent.
\end{Thm}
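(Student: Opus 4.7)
My plan is to construct the required boundary distance function $\delta$ directly from a hermitian metric $h$ on $N^{1,0}$ with positive leafwise Chern curvature, verify that $-\log\delta$ is strictly plurisubharmonic near $M$, and invoke Ohsawa--Sibony's theorem to conclude $\eta_\delta > 0$. First, by hypothesis the Levi foliation $\mathcal{F}$ extends to a non-singular holomorphic foliation $\tilde{\mathcal{F}}$ on an open neighborhood $W$ of $M$; under the natural isomorphism $N\tilde{\mathcal{F}}|_M \simeq N^{1,0}$, I would extend $h$ smoothly to a hermitian metric $\tilde h$ on the holomorphic line bundle $N\tilde{\mathcal{F}}$. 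By compactness of $M$, after shrinking $W$, the leafwise Chern curvature $i\Theta_{\tilde h}$ remains positive definite on $T\tilde{\mathcal{F}}$ throughout $W$.

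Next, at each $p \in M$, I choose holomorphic coordinates $(z', w)$ on $U \subset W$ in which the leaves of $\tilde{\mathcal{F}}|_U$ are $\{w = \mathrm{const}\}$. Since $M \cap U$ is a union of such leaves, it has the form $\{w \in \gamma\}$ for a real curve $\gamma \subset \C$; a biholomorphism of the $w$-plane straightens $\gamma$ so that $M \cap U = \{\Im w = 0\}$ and $\Omega \cap U = \{\Im w > 0\}$. Let $h_U$ denote the local expression of $\tilde h$ in the frame $\partial/\partial w$, and define the local candidate $\delta_U(z', w) \eqdef h_U(z', \Re w)\cdot \Im w$. The crucial Levi form calculation is
\[
i\partial\dbar(-\log\delta_U) = i\partial\dbar(-\log h_U) + \frac{i}{4(\Im w)^2}\, dw \wedge d\bar w.
\]
Restricted to leaf directions, the first term is the leafwise Chern curvature $i\Theta_{\tilde h}$, positive definite by construction; the second term is positive in the transverse direction and blows up like $(\Im w)^{-2}$ as $\Im w \to 0$; and the mixed cross terms coming from $-\partial_{z_j}\partial_{\bar w}\log h_U$ stay bounded. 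A Schur-complement estimate then shows that the full Levi form is strictly positive definite in a neighborhood of $M$ inside $\Omega$.

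Finally, I would globalize by patching the $\delta_U$'s via a partition of unity subordinate to a finite cover of $M$. The transition law $\Im w_\alpha = |f'_{\alpha\beta}|\,\Im w_\beta + O((\Im w_\beta)^2)$, combined with the transformation of $\tilde h$ under change of frame, yields $\delta_\alpha = \delta_\beta + O((\Im w_\beta)^2)$ on overlaps, so the patching errors are of order $\delta^2$ and do not disturb the Levi form estimate near $M$. The resulting $\delta$ is a boundary distance function of $\Omega$ satisfying $i\partial\dbar(-\log\delta) \geq \omega$ near $M$ for some hermitian metric $\omega$, and Ohsawa--Sibony's theorem delivers $\eta_\delta > 0$. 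The main technical obstacle will be this globalization: one must ensure that the partition-of-unity perturbations and the bounded cross terms in the Levi form are dominated by the transverse blow-up, which exploits both the holomorphic extension of the foliation (providing clean local product structures) and the $\Cont^{2,\alpha}$ regularity (to control the transition cocycle).
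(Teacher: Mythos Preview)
The paper does not prove this theorem. It is stated as a citation of Brunella \cite{brunella2008}, rephrased through Ohsawa--Sibony's characterization, and the paper immediately moves on to prove the \emph{converse} (Proposition \ref{converse}). So there is no proof in the paper to compare your attempt against.

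That said, your sketch is in the right spirit and tracks Brunella's original construction: use the ambient holomorphic foliation to get holomorphic flow-boxes, pull the positive metric on $N^{1,0}$ out to a defining function of the form $h\cdot(\text{signed transverse distance})$, and check that $-\log\delta$ has strictly positive Levi form near $M$ by a Schur-complement argument in which the transverse $1/(\Im w)^2$ blow-up dominates the bounded mixed terms. The appeal to Ohsawa--Sibony at the end is exactly how the present paper packages Brunella's conclusion.

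One point to tighten. You write that ``a biholomorphism of the $w$-plane straightens $\gamma$'' so that $M\cap U=\{\Im w=0\}$. With $M$ only $\Cont^{2,\alpha}$, the transverse curve $\gamma$ is merely $\Cont^{2,\alpha}$, and there is in general no biholomorphism of a \emph{full} two-sided neighborhood of $\gamma$ taking it to $\R$ (that would force real-analyticity via reflection). What one actually has is a Riemann map of the $\Omega$-side of $\gamma$ onto a half-plane, extending $\Cont^{2,\alpha}$ to the boundary by Kellogg--Warschawski; this gives coordinates holomorphic on $\Omega\cap U$ and $\Cont^{2,\alpha}$ up to $M$, which is all your construction needs. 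This is also where the H\"older exponent $\alpha>0$ genuinely enters, rather than only in ``controlling the transition cocycle'' as you suggest at the end. With that adjustment your globalization argument (the $\delta_\alpha=\delta_\beta+O(\delta_\beta^2)$ matching on overlaps, hence harmless partition-of-unity errors) goes through.
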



Let us prove the converse of Brunella's theorem in the following sense.
\begin{Prop}
\label{converse}
Let $X$ be a complex manifold and $\Omega \Subset X$ a relatively compact domain 
with $\Cont^{3}$-smooth Levi-flat boundary $M$.
If there is a boundary distance function $\delta$ of $\Omega$ with positive Diederich--Fornaess exponent,
the normal bundle $N^{1,0}$ is positive along the leaves.
\end{Prop}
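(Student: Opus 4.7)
The plan is to leverage Ohsawa--Sibony's theorem to extract a uniform lower bound for $i\partial\bar\partial(-\log\delta)$ near $M$, and then to identify the restriction of this Levi form to the leaf direction at the boundary with the leafwise Chern curvature of $h_\delta$ via an inner normal limit. The distinguished parametrization of \S\ref{Slocalformula} is what makes the boundary limit transparent, because it forces $(\partial\zeta/\partial t)(z',0)\equiv 1$ and turns the inner normal into the constant coordinate vector field $\partial/\partial y_n$.

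By Ohsawa--Sibony there exist a hermitian metric $\omega$ on $X$ and a compact set $K\Subset\Omega$ with $i\partial\bar\partial(-\log\delta)\geq\omega$ on $\Omega\setminus K$. I would fix an arbitrary $p\in M$ and choose a distinguished parametrization $\varphi(z',t)=(z',\zeta(z',t))$ around $p$ in a holomorphic coordinate $(U;z=(z',z_n))$. Then $L_p\cap U=\{z_n=0\}$, $T_{(z',0)}M=\{y_n=0\}$, $\partial/\partial y_n$ is inward normal, and $(\partial\zeta/\partial t)(z',0)\equiv 1$. Consequently $J_X\varphi_*(\partial/\partial t)|_{t=0}=\partial/\partial y_n$ and $h_\delta(z',0)=(\partial\delta/\partial y_n)(z',0)$.

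For small $\epsilon>0$ and $z'$ near $0'$, set $q_\epsilon(z'):=(z',i\epsilon)\in\Omega$; these points approach $(z',0)\in L_p\cap M$ along the inner normal as $\epsilon\to 0$. A Taylor expansion of $\delta$ in $\epsilon$ gives, uniformly in $z'$,
\[
\delta(q_\epsilon(z'))=\epsilon\,h_\delta(z',0)+O(\epsilon^2).
\]
Moreover, since $\delta|_{L_p}\equiv 0$ one has $\partial_{z_j}\delta(z',0)=\partial_{z_j}\bar\partial_{z_k}\delta(z',0)=0$ for $j,k<n$. Combining these two facts, a routine direct calculation shows that for $j,k\leq n-1$,
\[
-\frac{\partial^2\log\delta}{\partial z_j\partial\bar z_k}(q_\epsilon(z'))=-\frac{\partial^2\log h_\delta}{\partial z_j\partial\bar z_k}(z',0)+O(\epsilon),
\]
whose right-hand side at $\epsilon=0$ is precisely the matrix of $i\Theta_{h_\delta}(z',0)$ acting on $T^{1,0}L_p$ in the basis $\{\partial/\partial z_j\}_{j<n}$.

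Since $q_\epsilon(z')\in\Omega\setminus K$ for $\epsilon$ small, restricting the inequality $i\partial\bar\partial(-\log\delta)(q_\epsilon)\geq\omega(q_\epsilon)$ to the subspace spanned by $\{\partial/\partial z_j\}_{j<n}$ and passing to the limit $\epsilon\to 0$ yields
\[
i\Theta_{h_\delta}(z',0)\big|_{T^{1,0}L_p}\geq\omega(z',0)\big|_{T^{1,0}_{(z',0)}L_p}>0,
\]
the right-hand form being positive definite as the restriction of a hermitian metric to a complex subspace. Varying $z'$ and $p$, this establishes positivity of $N^{1,0}$ along every leaf. The main technical point is the identification of the boundary limit of the leafwise block of the Levi form of $-\log\delta$ with the Chern curvature matrix, which depends essentially on the vanishing $\partial_{z_j}\delta(z',0)=0$ along $L_p$: without it the singular term $(\partial_{z_j}\delta)(\bar\partial_{z_k}\delta)/\delta^2$ would blow up like $1/\epsilon$ rather than stabilize. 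The distinguished parametrization aligns coordinates so that these vanishings, together with the Taylor expansion of $\delta$, make the limit transparent.
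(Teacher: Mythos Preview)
Your proof is correct and follows essentially the same route as the paper's: both use the distinguished parametrization to identify $h_\delta(z',0)$ with $\partial\delta/\partial y_n(z',0)$, restrict the Ohsawa--Sibony inequality $i\partial\bar\partial(-\log\delta)\geq\omega$ to the leaf directions along the inner normal $\{(0',iy_n)\}$, and pass to the limit $y_n\searrow 0$ to obtain $i\Theta_{h_\delta}(p)\geq\omega|T^{1,0}_pM>0$. The only cosmetic difference is how the limit is computed: the paper observes that $\partial_{z_j}\bar\partial_{z_k}(-\log\delta)=\partial_{z_j}\bar\partial_{z_k}(-\log(\delta/y_n))$ and invokes the $\Cont^2$ extension of $\delta/y_n$ to $h_\delta$ at $y_n=0$ (Hadamard's lemma), whereas you Taylor-expand $\delta$, $\partial_{z_j}\delta$, and $\partial_{z_j}\bar\partial_{z_k}\delta$ separately and assemble the pieces; these are equivalent expressions of the same computation.
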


\begin{proof}
We equip $N^{1,0}$ with the hermitian metric $h_\delta$ induced from $\delta$. 
Take a point $p \in M$ and we will show that $i\Theta_{h_\delta}(p)$ defines a positive definite quadratic form on $T^{1,0}_p M $. 

Take a distinguished parametrization $\varphi = (z', \zeta(z',t))$ in $(U, z = (z_1, z_2, \cdots, z_n))$ around $p$. 
Since $\frac{\d'\zeta}{\d't}(z', 0) = 1$,
\[
h_\delta(z', 0) 
= \left(J_X \varphi_*\frac{\d'}{\d't}\right) \delta  
= \left(J_X \frac{\d'}{\d'x_n}\right) \delta = \frac{\d'\delta}{\d' y_n} (z', 0).
\]
Hence, 
\[
i\Theta_{h_\delta}(p) 
 = \sum_{j,k=1}^{n-1} \frac{\d'^2}{\d'z_j \d'\zbar_k} \left(-\log \frac{\d'\delta}{\d' y_n} \right) (0', 0)\, i dz_j \wedge d\zbar_k.
\]

On the other hand, from Ohsawa--Sibony's theorem, we have a hermitian metric $\omega$ on $X$ so that 
$i\d'\dbar (-\log \delta) > \omega$ in $\Omega$ except a compact subset; in particular, 
\[
\sum_{j,k=1}^{n-1} \frac{\d'^2 (-\log \delta)}{\d'z_j \d'\zbar_k}(0', iy_n)\, i dz_j \wedge d\zbar_k > \omega|T^{1,0}_{(0', iy_n)}(\C^{n-1}\times \{iy_n\})
\]
holds for $0 < y_n \ll 1$. Since 
\[
\sum_{j,k=1}^{n-1} \frac{\d'^2(-\log \delta)}{\d'z_j \d'\zbar_k}(0', iy_n)\, i dz_j \wedge d\zbar_k
= \sum_{j,k=1}^{n-1} \frac{\d'^2(-\log (\delta/y_n))}{\d'z_j \d'\zbar_k}(0', iy_n)\, i dz_j \wedge d\zbar_k,
\]
holds for $0 < y_n \ll 1$ and $\delta(z', iy_n)/y_n$ extends to $\frac{\d' \delta}{\d' y_n}(z', 0)$ as a $\Cont^2$-smooth function,
letting $y_n \searrow 0$, we conclude with 
\[
i\Theta_{h_\delta}(p) \geq \omega|T^{1,0}_p M > 0.
\]
\end{proof}

\begin{Rem}
This observation gives a negative answer to Question 1 of \cite{adachi2014}. 
If the complement of a Levi-flat real hypersurface is Takeuchi 1-convex, 
its Levi foliation cannot contain any compact leaf $L$. 
That is because Proposition \ref{converse} yields the positivity of $N^{1,0}$, 
therefore, $N^{1,0}|L$ is also positive and $[c_1(N^{1,0}|L)] \neq 0 $ in $H^2(L;\Z)$; 
but, $N^{1,0}|L$ is topologically trivial and $[c_1(N^{1,0}|L)] = 0$. This is absurd.
Therefore, in the cases (iii) and (iv) in \cite{adachi2014}*{Theorem 3.1}, 
their associated holomorphic disc bundles cannot be Takeuchi 1-convex. 
\end{Rem}

\subsection{Expression of the local Diederich--Fornaess exponent}

We are going to prove Theorem \ref{localformula}. 
Although the idea of its proof is similar to that of Proposition \ref{converse}, 
this time we will deal with the complex hessian matrix of $-\delta^\eta$ instead of that of $-\log \delta$
and this change requires us to look at not only $z'$-directions but also $z_n$-direction.
The following elementary lemma helps us to handle mixed components of those.

\begin{Lem}
\label{posidef}
Let $H = ( h_{j\ol{k}} )_{1 \leq j,k \leq n} \in M(n,\C)$ be a hermitian matrix. 
Denote $A = ( h_{j\ol{k}} )_{1 \leq j,k \leq n-1} \in M(n-1,\C)$, $b = (h_{j\ol{n}})_{1 \leq j \leq n-1} \in \C^{n-1}$, $c = h_{n\ol{n}} \in \R$. 
Then, $H$ is positive definite 
if and only if 
$cA - b \overline{ ^tb}$ is positive definite and $c > 0$. 
\end{Lem}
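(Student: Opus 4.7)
The plan is to reduce the lemma to the standard Schur complement factorization of a block hermitian matrix, applied to
\[
H = \begin{pmatrix} A & b \\ \overline{^t b} & c \end{pmatrix}.
\]
The statement is simply the usual Schur criterion rewritten so that the positive scalar $c$ multiplies $A$ rather than dividing $b\,\overline{^t b}$; the whole proof will reduce to one matrix identity plus the elementary observation that multiplication by a positive real number preserves positive definiteness.

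For the forward direction I would first observe that evaluating the quadratic form of $H$ on the last standard basis vector $e_n$ immediately yields $c>0$. Once $c>0$ is in hand, the next step is to verify the block-triangular congruence
\[
H = \begin{pmatrix} I_{n-1} & c^{-1} b \\ 0 & 1 \end{pmatrix} \begin{pmatrix} A - c^{-1} b\,\overline{^t b} & 0 \\ 0 & c \end{pmatrix} \begin{pmatrix} I_{n-1} & 0 \\ c^{-1}\,\overline{^t b} & 1 \end{pmatrix}
\]
by two routine block multiplications. Since the outer factors are each other's conjugate transpose and invertible, this exhibits $H$ as $H = L D L^*$ with $L$ invertible. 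By Sylvester's law of inertia, $H>0$ is equivalent to $D>0$, i.e.\ to the pair of conditions $c>0$ and $A - c^{-1} b\,\overline{^t b} > 0$; multiplying the second by the positive scalar $c$ converts it to $cA - b\,\overline{^t b} > 0$, which is one half of the lemma.

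The converse runs through the same factorization in reverse: given $c>0$ and $cA - b\,\overline{^t b} > 0$, dividing the latter by $c$ yields $A - c^{-1} b\,\overline{^t b} > 0$, hence $D>0$, and therefore $H = LDL^* > 0$. The only step that needs any actual work is the block multiplication verifying the factorization; there is no deeper obstacle, as positive definiteness of a block-diagonal matrix is manifestly equivalent to positive definiteness of each diagonal block, and scaling a positive-definite matrix by $c>0$ preserves the sign of its spectrum.
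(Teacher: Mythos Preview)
Your argument via the Schur complement factorization is correct and complete. The paper itself does not supply a proof of this lemma: it is introduced as an ``elementary lemma'' and stated without proof, so there is nothing to compare against beyond noting that your Schur-complement route is exactly the standard one the author presumably had in mind.
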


\begin{proof}[Proof of Theorem \ref{localformula}] 
We continue to use the local situation in the proof of Proposition \ref{converse}.

Let us consider the $(1,1)$-form 
\begin{align*}
\frac{i\d'\dbar (-\delta^\eta)}{\eta\delta^\eta} 
&= i\d'\dbar (-\log \delta) - \eta i \d'\log\delta \wedge \dbar \log \delta\\
&= \frac{i\d'\dbar (-\delta)}{\delta} + (1 - \eta) i \d'\log\delta \wedge \dbar \log \delta.
\end{align*}
and identify this with a hermitian matrix $H$ and 
denote its component by $h_{j\ol{k}}$. 
As in the proof of Proposition \ref{converse}, for $1 \leq j, k \leq n-1$, 
\begin{align*}
\lim_{y_n \searrow 0} h_{j\ol{k}}(0', iy_n) 
& = \lim_{y_n \searrow 0} \left( \frac{\d'^2 (-\log \delta)}{\d'z_j \d'\zbar_k} - \eta  \frac{\d' \log \delta}{\d'z_j} \frac{\d' \log \delta}{\d'\zbar_k}\right)\\
& = \lim_{y_n \searrow 0} \left( \frac{\d'^2 (-\log (\delta/y_n))}{\d'z_j \d'\zbar_k} - \eta  \frac{\d' \log (\delta/y_n)}{\d'z_j} \frac{\d' \log (\delta/y_n)}{\d'\zbar_k}\right)\\
& =  \frac{\d'^2 (-\log h_\delta)}{\d'z_j \d'\zbar_k}(p) - \eta  \frac{\d' \log h_\delta}{\d'z_j}(p) \frac{\d' \log h_\delta}{\d'\zbar_k}(p);
\end{align*}
for $1 \leq j \leq n-1, k = n$,
\begin{align*}
\lim_{y_n \searrow 0} y_n h_{j\ol{n}}(0', iy_n) 
& = \lim_{y_n \searrow 0} y_n \left( \frac{1}{\delta} \frac{\d'^2 (-\delta)}{\d'z_j \d'\zbar_n} + (1 - \eta) \frac{\d' \log (\delta/y_n)}{\d'z_j} \frac{1}{\delta} \frac{\d'\delta}{\d'\zbar_n}\right)\\
& = - \frac{1}{h_\delta(p)} \frac{\d'}{\d'z_j}\left( \frac{i}{2} h_\delta \right)(p) + (1 - \eta) \frac{\d' \log h_\delta}{\d'z_j}(p) \frac{1}{h_\delta(p)} \left(\frac{i}{2} h_\delta (p)\right) \\
& = \frac{\eta}{2i} \frac{\d' \log h_\delta}{\d'z_j}(p);
\end{align*}
for $j = k = n$,
\begin{align*}
\lim_{y_n \searrow 0} y_n^2 h_{n\ol{n}}(0', iy_n) 
& = \lim_{y_n \searrow 0} y_n^2 \left(\frac{1}{\delta} \frac{\d'^2 (-\delta)}{\d'z_n \d'\zbar_n} + (1 - \eta) \frac{1}{\delta^2}\left|\frac{\d' \delta}{\d'z_n}\right|^2 \right)\\
& = 0 + (1 - \eta)  \left(\frac{\d' \delta}{\d' y_n}(p)\right)^{-2} \left|\frac{\d' \delta}{\d'z_n}(p)\right|^2\\
& = \frac{1 - \eta}{4}.
\end{align*}
Note that
\[
iA_{h_\delta}(p) 
 = \sum_{j,k=1}^{n-1} \frac{\d' \log h_\delta}{\d'z_j}(p) \frac{\d' \log h_\delta}{\d'\zbar_k}(p)\, i dz_j \wedge d\zbar_k
\]
by its definition. Lemma \ref{posidef} yields a
\begin{claim}
$H(0', iy_n)$ is positive definite if and only if 
\begin{align*}
 &\frac{1}{y_n^2}\left(\frac{1-\eta}{4} + o(1)\right)\left(i\Theta_{h_\delta}(p) - \eta i A_{h_\delta}(p) + o(1)\right) - \frac{1}{y_n^2}\left(\frac{\eta^2}{4} i A_{h_\delta}(p) + o(1)\right) \\
=&\frac{1-\eta}{4y_n^2}\left( i\Theta_{h_\delta}(p) - \frac{\eta}{1-\eta} i A_{h_\delta}(p) + o(1) \right)
\end{align*}
is positive definite at $(0', iy_n)$, for $0 < y_n \ll 1$ where $o(1)$ are vanishing terms as $y_n \searrow 0$. 
\end{claim}

Now we prove the equality
\[
\eta_\delta(p) = \sup \{ \eta \in (0,1) \mid i\Theta_{h_\delta}(p) - \frac{\eta}{1-\eta} i A_{h_\delta}(p) > 0 \}
\]
by showing the inequalities for both directions. 

From the claim, $\eta < \eta_\delta(p)$ implies $i\Theta_{h_\delta}(p) - \frac{\eta}{1-\eta} i A_{h_\delta}(p) \geq 0$.
Note that $i\Theta_{h_\delta}(p) - \varepsilon i A_{h_\delta}(p) \geq 0$
implies 
$i\Theta_{h_\delta}(p) - \varepsilon' i A_{h_\delta}(p) > 0$ for $0 \leq \varepsilon' < \varepsilon$
since $i\Theta_{h_\delta}(p) > 0$ from Proposition \ref{converse}.
We thus have 
\begin{align*}
\eta_\delta(p) 
&\leq \sup \{ \eta \in (0,1) \mid i\Theta_{h_\delta}(p) - \frac{\eta}{1-\eta} iA_{h_\delta}(p) \geq 0 \}\\
&= \sup \{ \eta \in (0,1) \mid i\Theta_{h_\delta}(p) - \frac{\eta}{1-\eta} iA_{h_\delta}(p) > 0 \}.
\end{align*}

To show the other inequality, first we see from the claim that 
$i\Theta_{h_\delta}(p) - \frac{\eta}{1-\eta} iA_{h_\delta}(p) > 0$ implies 
$i\d'\dbar(-\delta^\eta)$ is strictly plurisubharmonic on $\Gamma_p$, a neighborhood of $\{(0', iy_n) \mid 0 < y_n \ll 1 \}$ in $\Omega$.
From the continuity of $i\Theta_{h_\delta}$ and $iA_{h_\delta}$, 
we can find a neighborhood $W \subset M $ of $p$ so that $i\Theta_{h_\delta}(q) - \frac{\eta}{1-\eta} iA_{h_\delta}(q) > 0$ for any $q \in W$. 
We apply the same argument at each $q \in W$ and 
conclude that $i\d'\dbar(-\delta^\eta)$ is strictly plurisubharmonic on $\bigcup_{q \in W} \Gamma_q$,
which yields 
\begin{align*}
\eta_\delta(p) 
&\geq \sup \{ \eta \in (0,1) \mid i\Theta_{h_\delta}(p) - \frac{\eta}{1-\eta} iA_{h_\delta}(p) > 0 \}.
\end{align*}
The proof is completed.
\end{proof}

\section{The exponent of conformal harmonic measures}
\label{Srelation}

\subsection{Conformal harmonic measure}
Let us briefly recall the notion of conformal harmonic measure following Brunella \cite{brunella2007}. 
For the notions of foliated harmonic measure and current, and their applications, 
we refer the reader to Fornaess and Sibony \cite{fornaess-sibony2008} and Deroin \cite{deroin-lectnote}.

We work with a complex surface $X$ and $\Omega \Subset X$ with $\Cont^3$-smooth Levi-flat boundary $M$.
We endow the Levi foliation with a continuous leafwise hermitian metric $\omega$ and 
its associated volume form $\mathrm{vol}_\omega$.
We restrict ourselves to describe conformal harmonic measures in this setting. 

\begin{Def}
Let $\mathcal{M} \subset M$ be a compact saturated set of the Levi foliation of $M$. 
Consider a finite cover $\{ U \}$ of $M$ by foliated charts 
$(z'_U, t_U): U \to D_U \times T_U \subset \C \times \R $ 
and denote the transversals of $\mathcal{M}$ by $\mathcal{M}_U \eqdef{t_U}(\mathcal{M} \cap U)$. 
A collection of transverse measures $\{\nu_U\}$ is said to be {\em conformal of exponent} $\alpha$ ($>0$) {\em adapted to} $\mathcal{M}$
if 
\begin{itemize}
\item Each $\nu_U$ is a finite Borel measure supported on $\mathcal{M}_U$;
\item $\{\nu_U\}$ relate in the following manner
\[
t_V^* (\nu_{V}) = \left|\frac{dt_V}{dt_U}\right|^\alpha \cdot \nu_U
\]
when two foliated charts $U$ and $V$ intersect.
\end{itemize}
\end{Def}

The example in mind is the $\alpha$-dimensional Hausdorff measure $\mathcal{H}_\alpha$ 
on transversals $\mathcal{M}_U$ with $0 < \mathcal{H}_\alpha(\mathcal{M}_U) < \infty$, 
then the exponent is just $\alpha$. 

\begin{Def}
A measure $\mu$ supported on a compact saturated set $\mathcal{M} \subset M$ is said to be a 
{\em conformal harmonic measure of exponent} $\alpha$ 
if there exists a finite cover $\{ U \}$ of $M$ by foliated charts 
with which $\mu$ has a local disintegration $\mu|U = h_U \mathrm{vol}_\omega \otimes \nu_U$ on each $U$ 
such that 
\begin{itemize}
\item $\{ \nu_U \}$ is a collection of transverse measures and conformal of exponent $\alpha$
      adapted to $\mathcal{M}$;  
\item $h_U$ is a non-negative real-valued function on $U$ which is integrable with respect to $\mathrm{vol}_\omega \otimes \nu_U$,
      and $\log h_U$ is bounded on $\mathcal{M} \cap U$;
\item On $\nu_U$-almost every plaque of $\mathcal{M}$, $h_U$ is positive and $\d'_b \dbar_b$-closed.  
\end{itemize}
\end{Def}



\subsection{The Diederich--Fornaess exponent and the exponent of conformal harmonic measures}
We are going to prove Proposition \ref{relation}.

\begin{proof}[Proof of Proposition \ref{relation}]
Let us suppose the existence of a $\Cont^3$-smooth boundary distance function $\delta$ 
with $\eta_\delta(p) = \eta$ on $\mathcal{M}$.
Consider the hermitian metric $h_\delta^\alpha$ of $(N^{1,0})^{\otimes \alpha}$ induced from $h_\delta$ of $N^{1,0}$, 
where we put $\alpha \eqdef (\eta^{-1} - 1)^{-1}$.
Note that $N^{1,0}$ is a flat CR line bundle over $M$ with structure group $\R_{>0}$, 
and its power $(N^{1,0})^{\otimes \alpha}$ is well-defined for $\alpha \in \R$.
From its transition rule, we have a well-defined measure $\mu$ supported on $\mathcal{M}$ by letting  
$\mu = (h^\alpha \mathrm{vol}_\omega \otimes \mathcal{H}^\alpha)|\mathcal{M}_U$ on each foliated chart $U$.

The measure $\mu$ is a conformal harmonic measure of exponent $\alpha$:
We take an arbitrary finite cover of $M$ by foliated charts and 
disintegrate $\mu$ as in its definition. 
Since $0 < \mathcal{H}_\alpha(\mathcal{M}_U) < \infty$ is supposed on each foliated chart $U$,
we can check the requirements for $\mu$ to be a conformal harmonic measure 
except $\d'_b\dbar_b$-closedness of $h_\delta^\alpha$ on plaques contained in $\mathcal{M}$.  
It follows from Theorem \ref{localformula} that
\[
\left(\frac{i\d'_b\dbar_b (-h_\delta)}{h_\delta} + 2 {i\d'_b \log h_\delta \wedge \dbar_b \log h_\delta} \right) \eta 
=
\left(\frac{i\d'_b\dbar_b (-h_\delta)}{h_\delta} + {i\d'_b \log h_\delta \wedge \dbar_b \log h_\delta} \right)
\]
everywhere on $\mathcal{M}$, and this yields that  
\begin{align*}
\d'_b\dbar_b (h_\delta^{\alpha})
&= -\alpha h_\delta^{\alpha} \left( \frac{\d'\dbar (-h_\delta)}{h_\delta} + (1 - \alpha) \frac{\d' h_\delta \wedge \dbar h_\delta}{h_\delta^2} \right) \\
&= -\alpha h_\delta^{\alpha} \left( \frac{\d'\dbar (-h_\delta)}{h_\delta} + \frac{1 - 2\eta}{1 - \eta} \frac{\d' h_\delta \wedge \dbar h_\delta}{h_\delta^2} \right)\\ 
&= 0.
\end{align*}
\end{proof}

\begin{Rem}
The obtained conformal harmonic measure is very special one since it has $\Cont^2$-smooth local coefficient $h^\alpha_\delta$.
\end{Rem}

\begin{Rem}
An expected condition $\alpha \leq 1$ and the formula $\alpha = (\eta^{-1} - 1)^{-1}$ suggest that $\eta \leq 1/2$. 
This actually holds in a general context; we refer the reader to our forthcoming paper \cite{adachi-brinkschulte2014}
and a paper of Fu and Shaw \cite{fu-shaw2014}.
\end{Rem}

We conclude this paper with an example that illustrates the situation in Proposition \ref{relation}. 

\begin{Ex}
Let $\Sigma$ be a compact Riemann surface of genus $\geq 2$. 
Fix an identification of its universal cover $\widetilde{\Sigma}$ with the unit disk $\D$
and express $\Sigma = \D/\Gamma$ by a Fuchsian group $\Gamma$.

We consider a compact complex surface $X = \D \times \CP^1 / \sim$ 
where $(z', \zeta) \sim (\gamma z', \gamma \zeta)$ for $\gamma \in \Gamma$
and a domain $\Omega = \D \times \D / \sim \; \subset X$. 
The boundary $M = \bd \Omega = \D \times \bd\D / \sim$ is real-analytic Levi-flat since the horizontal foliation 
of $\D \times \bd\D$ induces a foliation by complex curves on $M$. We equip the Poincar\'e 
metric $\omega$ on each leaf by using the quotient map. 

A boundary distance function $\delta$ of $\Omega$ is induced from a function   
\[
\delta(z', \zeta) = 1 - \left| \frac{\zeta - z'}{1 - \overline{z'} \zeta} \right|^2
\]
on $\D \times \C$, in which coordinate we will work from here. 
The induced metric $h_\delta$ of $N^{1,0}$ enjoys $i\Theta_{h_\delta}(0, \zeta) = iA_{h_\delta}(0, \zeta) = idz' \wedge d\ol{z'}$
and it follows that the local Diederich--Fornaess exponent of $\delta$ is constant equal to 1/2. 

The corresponding exponent $\alpha$ equals 1. Hence, a conformal harmonic measure $\mu$ with exponent 1
 is induced on $M$ from Proposition \ref{relation}, that is, 
\[
\mu = 2\frac{1-|z'|^2}{|e^{i\theta} - z'|^2} \mathrm{vol}_\omega \otimes d\theta
\]
where we denote $\zeta = e^{i\theta}$ on $\D \times \bd\D$. 
This expression agrees with the well-known one up to a multiplicative constant. 
\end{Ex}


\subsection*{Acknowledgments}
The author would like to thank K. Matsumoto and T. Ohsawa for their useful comments on
techniques to handle the Levi form and related literatures respectively.  
Part of this work was done during ``Geometry and Foliations 2013'' at the University of Tokyo;
The author is grateful to the organizers for their support and providing the stimulating environment.

He also thanks A.-K. Herbig and S. Biard for valuable discussions on Remark 2.7 and 2.8 respectively, 
and Ninh V. T. for pointing out mistakes before the proofreading.

\begin{bibdiv}
\begin{biblist}
%
%
\bib{adachi2014}{article}{
   author={Adachi, Masanori},
   title={On the ampleness of positive CR line bundles over Levi-flat manifolds},
   journal={Publ. Res. Inst. Math. Sci.},
   volume={50},
   date={2014},
   number={1},
   pages={153--167},
}

\bib{adachi-brinkschulte2014}{article}{
   author = {Adachi, Masanori},
   author = {Brinkschulte, Judith},
   title = {A global estimate for the Diederich--Fornaess index of weakly pseudoconvex domains},
   status = {to appear in Nagoya Math. J.},
   eprint = {arXiv:1401.2264},
}

%
%
\bib{barrett-fornaess1988}{article}{
   author={Barrett, D. E.},
   author={Forn{\ae}ss, J. E.},
   title={On the smoothness of Levi-foliations},
   journal={Publ. Mat.},
   volume={32},
   date={1988},
   number={2},
   pages={171--177},
}

\bib{biardthese}{article}{
   author={Biard, S{\'e}verine},
   title={Estim\'ees $L^2$ pour l'op\'erateur d-bar et non-existence d'hypersurface Levi-plate dans des vari\'et\'es k\"ahl\'eriennes},
   journal={Th\`ese de doctorat, \'Ecole Doctrale des Sciences math\'ematiques de Paris Centre},
   date={2013},
}

\bib{brunella2007}{article}{
   author={Brunella, Marco},
   title={Mesures harmoniques conformes et feuilletages du plan projectif
   complexe},
   journal={Bull. Braz. Math. Soc. (N.S.)},
   volume={38},
   date={2007},
   number={4},
   pages={517--524},
   issn={1678-7544},
}

\bib{brunella2008}{article}{
   author={Brunella, Marco},
   title={On the dynamics of codimension one holomorphic foliations with
   ample normal bundle},
   journal={Indiana Univ. Math. J.},
   volume={57},
   date={2008},
   number={7},
   pages={3101--3113},
   issn={0022-2518},
}
\bib{deroin2005}{article}{
   author={Deroin, Bertrand},
   title={Hypersurfaces Levi-plates immerg\'ees dans les surfaces complexes
   de courbure positive},
   journal={Ann. Sci. \'Ecole Norm. Sup. (4)},
   volume={38},
   date={2005},
   number={1},
   pages={57--75},
}
		
\bib{deroin-lectnote}{article}{
   author={Deroin, Bertrand},
   title={Brownian motion on foliated complex surfaces, Lyapunov exponents and applications},
   conference={
      title = {Lecture notes of a course available in {\em Abstracts for Geometry and Foliations}},
      address={Tokyo},
      date={2013},
   },
   pages={3--39},
}

\bib{diederich-fornaess1977}{article}{
   author={Diederich, Klas},
   author={Fornaess, John Erik},
   title={Pseudoconvex domains: bounded strictly plurisubharmonic exhaustion
   functions},
   journal={Invent. Math.},
   volume={39},
   date={1977},
   number={2},
   pages={129--141},
}

\bib{diederich-fornaess-worm}{article}{
   author={Diederich, Klas},
   author={Fornaess, John Erik},
   title={Pseudoconvex domains: an example with nontrivial Nebenh\"ulle},
   journal={Math. Ann.},
   volume={225},
   date={1977},
   number={3},
   pages={275--292},
}
		
\bib{diederich-ohsawa2007}{article}{
   author={Diederich, Klas},
   author={Ohsawa, Takeo},
   title={On the displacement rigidity of Levi flat hypersurfaces---the case
   of boundaries of disc bundles over compact Riemann surfaces},
   journal={Publ. Res. Inst. Math. Sci.},
   volume={43},
   date={2007},
   number={1},
   pages={171--180},
}

\bib{frankel1995}{article}{
  author={Frankel, Sidney},
  title={Harmonic analysis of surface group representations to Diff($S^1$) and Milnor type inequalities},
  journal={Pr\'epublication de l'\'Ecole Polytechnique},
  volume={1125},
  date={1995},
}

\bib{fornaess-sibony2008}{article}{
   author={Forn{\ae}ss, John Erik},
   author={Sibony, Nessim},
   title={Riemann surface laminations with singularities},
   journal={J. Geom. Anal.},
   volume={18},
   date={2008},
   number={2},
   pages={400--442},
}
\bib{fu-shaw2014}{article}{
   author={Fu, Siqi},
   author={Shaw, Mei-Chi},
   title={The Diederich-Forn{\ae}ss exponent and non-existence of Stein domains with Levi-flat boundaries},
   status ={J. Geom. Anal., published online on 25 November 2014}
}

\bib{harrington-shaw2007}{article}{
   author={Harrington, Phillip S.},
   author={Shaw, Mei-Chi},
   title={The strong Oka's lemma, bounded plurisubharmonic functions and the
   $\overline{\partial}$-Neumann problem},
   journal={Asian J. Math.},
   volume={11},
   date={2007},
   number={1},
   pages={127--139},
   issn={1093-6106},
}

\bib{herbig-mcneal2012}{article}{
   author={Herbig, Anne-Katrin},
   author={McNeal, Jeffery D.},
   title={Oka's lemma, convexity, and intermediate positivity conditions},
   journal={Illinois J. Math.},
   volume={56},
   number={1},
   date={2012},
   pages={195--211},
}

\bib{ohsawa-sibony1998}{article}{
   author={Ohsawa, Takeo},
   author={Sibony, Nessim},
   title={Bounded p.s.h. functions and pseudoconvexity in K\"ahler manifold},
   journal={Nagoya Math. J.},
   volume={149},
   date={1998},
   pages={1--8},
}

\bib{ohsawa2007}{article}{
   author={Ohsawa, Takeo},
   title={On the complement of Levi-flats in K\"ahler manifolds of dimension
   $\geq3$},
   journal={Nagoya Math. J.},
   volume={185},
   date={2007},
   pages={161--169},
}
\end{biblist}
\end{bibdiv}

\end{document}